\newtheorem{example}[theorem]{Example}
\title{\LARGE \bf
Characterizing Controllability and Observability for Systems with Locality, Communication, and Actuation Constraints
}
\author{\parbox{6. in}{\centering Lauren Conger\qquad  Yiheng Lin\qquad  Adam Wierman\qquad  Eric Mazumdar
         \thanks{LC is supported by a NDSEG fellowship from the Air Force Office of Research and a PIMCO fellowship. YL is supported by Amazon AI4Science Fellowship and PIMCO Graduate Fellowship in Data Science. This work is supported by NSF Grants CNS-2146814, CPS-2136197, CNS-2106403, NGSDI-2105648, CNS-2240110.}\\
         Caltech\\
         {\tt\small \{lconger,yihengl,adamw,mazumdar\}@caltech.edu}}
         }
\begin{document}
\maketitle
\begin{abstract}
This paper presents a closed-form notion of controllability and observability for systems with communication delays, actuation delays, and locality constraints. The formulation reduces to classical notions of controllability and observability in the unconstrained setting. As a consequence of our formulation, we show that the addition of locality and communication constraints may not affect the controllability and observability of the system, and we provide an efficient sufficient condition under which this phenomenon occurs. This contrasts with actuation and sensing delays, which cause a gradual loss of controllability and observability as the delays increase. We illustrate our results using linearized swing equations for the power grid, showing how actuation delay and locality constraints affect controllability.
\end{abstract}

\section{Introduction}
Sensor and actuator placement is a crucial problem in the design of real world systems in application areas ranging from power systems and traffic networks to environmental monitoring. Taking power systems as an example, devices such as generators and transformers can provide additional inputs to actuate the system, while other devices like meters generate signals about the system's state. A system designer may face the challenge of deploying a limited number of such devices to different locations in a power system with the goal of keeping the system stable or performing well in other control tasks. While sensor and actuator placement is an old problem that has been well studied \cite{kalman_controllability_1961,gilbert_controllability_1963}, the proliferation of low-cost sensors and the decentralization of control in large systems like power systems has led to new challenges.
Many real-world systems are subject to delayed and information-constrained sensors that can significantly impact the quality of the control. However, we currently lack tools to rigorously characterize the performance loss under various sensor configurations. This leads us to study the problem of quantifying the quality of a set of controllers and sensors in systems subject to actuation and sensing delays as well as communication constraints.

While task-specific metrics like cost are commonly used to measure the quality of a set of sensors and actuators, there are many settings where the cost is unknown or time varying; here task-agnostic notions of controllability and observability of dynamical systems~\cite{gilbert_controllability_1963,georges_use_1995} offer a suitable proxy. Controllability, defined as the determinant of the controllability Gramian \cite{kailath_linear_1980}, is the volume over which a control input with magnitude one can drive the state to the zero state. Likewise, observability, defined as the determinant of the observability Gramian, is the volume of the signal-to-noise ratio along any direction in the state space. %
These metrics are used to compare different sets of actuators and sensors for dynamical systems; a system with a larger controllability, given fixed dynamics, is preferred to one with smaller controllability. %

Although the notions of controllability and observability are well-defined for classic control problems, new challenges arise from the decentralized control of modern large-scale systems \cite{shin2023near,lin2022decentralized, li2021distributed,yu_online_2023}, where the controllers may face different constraints on communication speed, actuation delays, and/or locality. Such constraints, which we refer to as \textit{system level constraints}, cannot easily be captured in the classical formulations of controllability/observability based on the Gramian or control inputs \cite{kailath_linear_1980}. Fortunately, a recently proposed framework called \textit{system level synthesis (SLS)} \cite{anderson_system_2019} handles system level constraints efficiently, where the key technique is to formulate the original control problem in terms of closed-loop maps. Some recent works address controllability and observability in the specific setting of systems with delays \cite{wang_method_2020,zhou_data-driven_2018,jinna_controllability_2009}; SLS allows for a much broader class of constraints, including delays. Using SLS, a recent work \cite{li_global_2023} shows sufficient conditions under which locality constraints on model predictive control (MPC) leads to \textit{no loss in optimality} for any convex global cost function. However, even with SLS, it is unclear whether one can formally extend the notion of controllability and observability to include system level constraints faced by large-scale networked systems, and whether such measures can be computed efficiently to quantify the loss of performance under different combinations of system level constraints. %

\textbf{Contributions.} In this work, we propose a formal definition of controllability and observability subject to system level constraints (Section \ref{sec:constrained_systems}). Our definitions are based on the SLS framework and we draw connections to classical distributed control, differential privacy, and speed-accuracy tradeoffs in neurons. We provide a provable closed-form solution for our SLS-based definitions (\Cref{alg:ellipsoid-volume}, \Cref{prop:ellipsoid-algorithm}); the definitions strictly generalize the classic notions of controllability and observability (\Cref{thm:reduction}). 

Importantly, we establish rank conditions for system level constraints under which there is no performance loss compared with the unconstrained system (\Cref{thm:optimality_constrained_systems}). This rigorously characterizes the empirical observation that locality and communication constraints do not necessarily decrease controllability and observability. In contrast, actuation delay does not satisfy this condition, consistent with numerical results showing a gradual decrease in controllability as actuation delay increases. We perform numerical experiments on a synthetic example and a real-world model of linearized and discretized swing equations for the power grid (\Cref{sec:numerics}).

\textbf{Notation.}
Let $N_x\in\mathbb{Z}_+$ and $N_u\in\mathbb{Z}_+$ be the state and input dimensions respectively. Bold letters such as $\mathbf{x}$ indicate signals. Let $x_0\coloneqq x(0)$ and $x_T\coloneqq x(T)$. A single index such as $Z[i]$ indicates the $i^{th}$ row or block of rows of a matrix, and a double index such as $Z[i,j]$ indicates the $i^{th}$ row and $j^{th}$ column of a matrix. Brackets around a scalar indicates the set of indices $[a]\coloneqq [1,\dots,a]$. A vector over a variable, such as $\vec\phi_x$, is the vectorization of matrix, where the columns are stacked into a vector. A hat symbol over a matrix indicates the vectorization of matrix multiplication, that is, $\hat A$ is defined such that if $Y=A X$ for two matrices $X$ and $Y$, then $\vec y = \hat A \vec x$. We define $\supp A$ to be an indicator matrix such that $(\supp A)[i, j] = 0$ if $A[i, j]$ is constrained to be zero.

\section{Motivation \& Background}
We begin by introducing the problem of computing the controllability and observability of a dynamical system and illustrating the challenges that come from considering constraints in the classical formulation of the problem. We then show how system level synthesis (SLS) offers a framework for computing the controllability and observability of a system while incorporating system-level constraints.

\subsection{Classical Controllability and Observability}
We consider linear systems with dynamics given by
\begin{align}\label{eq:dynamics}
    x(t+1) = A x(t) + B u(t)\,, \quad
    y(t) = C x(t)\,,
\end{align}
where $x(t) \in \mathbb{R}^{N_x}$, $u(t) \in \mathbb{R}^{N_u}$, and $y(t) \in \mathbb{R}^{N_y}$. A system is said to be \textit{controllable} in time $T$ if it can be driven from any state $x_0$ to any terminal state $x_T$. For an unconstrained system, the controllability Gramian and controllability are given, respectively, by
\begin{align}\label{eq:controllability_classical}
   W_c = \sum_{i=0}^{T-1} A^i B B^\top (A^\top)^i\,, \quad V_c = \det W_c\,.
\end{align}
The magnitude of each eigenvalue of $W_c$ is the distance in the direction of the eigenvector that one unit of control provides (after accounting for the trajectory resulting from the initial condition alone). The determinant of $W_c$ is then the state space volume that is reachable with one unit of control input. However, this computational method assumes that any sequence of control input can be selected; this assumption is not true when information among states is restricted, actuation is delayed, or controllers are dependent on only local information.

Similarly, for an unconstrained system, one can define the observability Gramian and observability respectively, as 
\begin{align}\label{eq:observability_classical}
    W_o = \sum_{i=0}^{T-1} A^{\top^i} C^\top C A^i\,, \quad V_o = \det W_o\,.
\end{align}
In this case, the magnitude of each eigenvalue of the Gramian $W_o$ is the signal-to-noise ratio for a state in the direction of the corresponding eigenvector. The determinant of $W_o$ corresponds to the `observability volume' of the system. Similarly to controllability, this method assumes that none of the observations are delayed and that state estimation uses all of the observation estimation. This does not apply to large-scale systems where state estimation depends only on local observation, and the observations are not immediate. 

This paper proposes a new method for computing controllability and observability that accounts for delayed actuation and observation, as well as local control and information sharing. Critical to the proposed approach is the system level synthesis (SLS) parameterization of the linear system, which we outline in the following section. In what follows, we consider the state estimation problem as the dual of the control problem; in what follows, one can substitute 
\begin{align*}
    A \leftrightarrow A^\top\,, \quad B \leftrightarrow C^\top \,.
\end{align*}
to obtain the corresponding observability framework.

\subsection{System Level Synthesis (SLS)}
The SLS parametrization of the state dynamics in \eqref{eq:dynamics} can be written as 
\begin{align}\label{eq:SLS_dynamics}
    \phi_x(t+1) = A \phi_x(t) + B \phi_u(t)\,,
\end{align}
where $x(t)=\phi_x(t) x_0$ and $u(t)=\phi_u(t) x_0$, $\phi_x(t)\in\R^{N_x \times N_x}$, $\phi_u(t)\in\R^{N_u \times N_x}$, and $\phi_x(0)=I$. Let the closed-loop maps $\phi_x$, $\phi_u$, and $\phi$ be defined as 
\begin{align*}
    \phi_x =\begin{bmatrix}
        \phi_x(0) \\ \vdots \\  \phi_x(T\tminus1)
    \end{bmatrix}\,, \ \ 
    \phi_u = \begin{bmatrix}
        \phi_u(0) \\ \vdots \\ \phi_u(T\tminus1)
    \end{bmatrix}\,, \ \
        \phi = \begin{bmatrix}
            \phi_x \\ \phi_u
        \end{bmatrix}\,.
\end{align*}

A prior result from~\cite[Theorem 2]{anderson_system_2019} guarantees that the subspace defined by $\phi$ parameterizes all possible system responses and that the state feedback controller is $u(t) = K(t) x(t)$ with $K(t)=\phi_u(t) \phi_x(t)^{-1}$;
therefore optimizing over $\phi$ is equivalent to optimizing over $u$.

This parameterization allows us to explicitly write constraints on $\phi_x$ and $\phi_u$ which encode the communication, actuation, and sensing constraints. 
To enforce that the $i^{th}$ entry of $x_0$ does not affect the $j^{th}$ entry of the state or input at time $t$, we set $\phi_x(t)[j,i]=0$ or $\phi_u(t)[j,i]=0$ respectively. We write the system level constraints as
\begin{align}\label{eq:constraints}
    S_x \vec \phi_x = 0\,, \quad S_u \vec\phi_u = 0\,,
\end{align}
where $S_x$ ($S_u$ resp.) has a single nonzero entry in each row, with the column corresponding to the entry of $\phi_x$ ($\phi_u$ resp.) that is constrained to be zero. We define the set of closed-loop maps that satisfy \eqref{eq:constraints} as $\mathcal{S}$, such that $\phi\in \mathcal{S}$. Equipped with the ability to specify system level constraints, in the next section we present the definitions of controllability under these constraints.

\section{Constrained Systems}\label{sec:constrained_systems}
Our focus in this paper is systems that are subject to actuation delay, communication delay and restrictions, and locality constraints. These constraints can be expressed as support constraints on the closed-loop maps $\phi_x$ and $\phi_u$ introduced in the previous section. 

We formally define the different types of constraints considered in this work as follows. We define a directed graph whose adjacency matrix $\G[i,j]$ is defined such that $\G[i,j]=1$ if $A[i,j]\ne 0$; $\G[i, j] = 0$ otherwise. 
\begin{itemize}[nolistsep, leftmargin=*]
\item \textbf{Actuation delay}: The system is subject to a $\tau$-step actuation delay if the actuator can react to state information at least $\tau$ steps after, i.e., $\phi_u(t) = 0,$ for $t < \tau$. 
\item \textbf{Communication speed}: The communication speed is $v$ if the propagation of communication signals is $v$ times the dynamics, i.e., $\supp{\phi_x(t)} = \supp{\G^{\lfloor vt \rfloor}}$ and $\supp \phi_u(t) = \supp\left( \abs{B^\top} \supp \phi_x(t) \right)$, so that $x(t)[j]$ cannot depend on $x_0[i]$ if $\text{dist}_{\mathcal{G}}(i, j) > vt$. Likewise, $u(t)[i]$ cannot depend on $x_0[j]$ if $B[k,i]=0$ for all $k$ such that $\text{dist}_\G(j,k) \le vt$.
\item \textbf{Locality}: The system is subject to a locality constraint with radius $r$ if the state of a node can only be affected by the states of its $r$-hop neighbors, i.e., $\phi_x(t)[j, i] = 0$ if $\text{dist}_\G(i, j) > r$, and a controller uses information from the nearest $r$-hops away from the states on which it acts, specified as $\supp{\phi_u} = \supp \left(\abs{B^\top} \supp{\phi_x} \right)$.
\end{itemize}

We formulate the definition of controllability and observability for constrained systems using SLS operators. We consider the magnitude of any $x_0$ such that one unit of control can drive the state to the origin, again scaled by the effect of the dynamics without any control.
Using this notion of controllability, we define the controllability of \eqref{eq:dynamics} with constraints \eqref{eq:constraints} as the following:
\begin{equation}\label{eq:controllability_SLS}
    \begin{split}
            Y_c ={}& \text{volume}\left\{ A^T x_0\ :\ h_c(x_0)\le 1\right\}, \text{ where } \\
    h_c(x_0) \coloneqq{}& \min_{\phi_u} x_0^\top \left(\sum_{i=0}^{T-1}\phi_u(i)^\top \phi_u(i) \right)x_0  \\
    &\text{s.t.}\ S_c \vec \phi_u + C_c = 0\,.
    \end{split}
\end{equation}
The constraint for the terminal state condition, $x_T=0$, as well as support constraints for $\phi_x$ and $\phi_u$, are specified by the form of $S_c$ and $C_c$. Recall that we use the hat symbol over a matrix to indicate the vectorization of matrix multiplication. The constraint matrices $S_c$ and $C_c$ are given by %
\begin{align*}
    S_c &=\begin{bmatrix}
        S_u \\ D_1 \\ D_2
    \end{bmatrix},\
    C_c = \begin{bmatrix}
        0 \\ v_1 \\ \hat A ^T \vec I
    \end{bmatrix},\ v_1 =  \begin{bmatrix}
       S_x[1]  \vec I \\
       \vdots \\
        S_x[T\tminus1] \hat A^{T\tminus1} \vec I
    \end{bmatrix}\,, \ 
    D_1 = \begin{bmatrix}
        S_x[1] \hat B & & &\\
        S_x[2] \hat A \hat B & S_x[2] \hat B & & \\
        \vdots & & \ddots & \\
        S_x[T\tminus1] \hat A ^{T\tminus2} \hat B & \dots & & \hat B 
    \end{bmatrix}\,,\\
    D_2 &= \begin{bmatrix}
        \hat A ^{T\tminus1}\hat B & \dots & \hat B
    \end{bmatrix}\,.
\end{align*}
The observability of \eqref{eq:dynamics} is likewise given by
\begin{equation}\label{eq:observability_SLS}
    \begin{split}
        Y_o ={}& \text{volume}\left\{ A^{T^\top} x_0\ :\ h_o(x_0)\le 1\right\}, \text{ where }\\
    h_o(x_0) \coloneqq{}& \min_{\phi_u} x_0^\top \left(\sum_{i=0}^{T-1}\phi_u(i)^\top \phi_u(i) \right)x_0  \\
    &\text{s.t.}\ S_o \vec \phi_u + C_o = 0\,,
    \end{split}
\end{equation}    
with $S_o$ and $C_o$ again utilizing the terminal constraint $x_T=0$, with $S_o$ and $C_o$ defined analogously to $S_c$ and $C_c$.

\begin{example}[Distributed Control]\label{example:distributed} 
    In large-scale systems with fast propagation of the dynamics, such as the power grid, the speed at which sensors send information can be on the same order of magnitude as the dynamics \cite{taft_sensing_2016} and understanding controllability and observability with delays is an important problem \cite{nwankpa_observability_2015}. This means that information about a distant state may not be available for a number of time steps, and controllers must be designed accounting for this lack of information. Additionally, if a large disturbance hits a system, ensuring that the disturbance will not propagate further than $r$ neighbors ensures that distant states will not be affected by a disturbance about which they only have delayed information \cite{anderson_system_2019}. The sensor and actuator co-design problem with constraints \cite{matni_regularization_2014} is important for addressing these new challenges; comparing the controllability and observability of sensors and actuators for large-scale systems with these delays and constraints helps designers understand how much faster or more densely-placed hardware improves performance.
\end{example}
\begin{example}[Sensorimotor control]\label{example:sensorimotor} In neurophysiology, a nerve is made up of several heterogeneous communication channels formed by different types of axons. To achieve the control objective, the design of the communication channels faces a strict speed-accuracy trade-off \cite{nakahira2019theoretical} over channels with lower delay and less information, or a channel with longer delay and more information. Our constrained observability problem \eqref{eq:observability_SLS} provides a method to compare the observability of different sets of axons that satisfy a fixed metabolic cost.
\end{example}

\begin{example}[Privacy as Observability]\label{example:privacy}
The problem of preserving privacy in decentralized control of large-scale networked systems has received considerable attention recently, including privacy through the perspective of observability \cite{zhang_observability_2023,kawano_design_2020,kawano_revisit_2018,degue_differentially_2020,le_ny_privacy-preserving_2015}. Using the interpretation of observability as a signal-to-noise ratio of an observer, less observability potentially corresponds to less noise needed for obtaining some fixed level of privacy. In particular, our method provides tools for analyzing this signal-to-noise ratio with sensing delay and locality of sensors.
 
\end{example}

\section{Results}

This section presents our main three technical results. Theorem~\ref{thm:reduction} states that the controllability volume given by \eqref{eq:controllability_SLS} is equivalent to classical controllability \eqref{eq:controllability_classical} when the constraint set is empty. Algorithm~\ref{alg:ellipsoid-volume} offers a provably correct closed-form solution for the controllability, and Theorem~\ref{thm:optimality_constrained_systems} provides conditions under which adding constraints to the closed-loop map $\phi$ does not change the set of feasible states and inputs. As a consequence, the controllability does not change with the addition of the constraints (Corollary \ref{cor:rank_condition}).

\subsection{Connection to classical methods}

Our first result shows that, in the setting where there are no support constraints, the generalized definition of controllability \eqref{eq:controllability_SLS} reduces to the classical definition \eqref{eq:controllability_classical}. This shows that the formulation is consistent with classical  controllability.

\begin{theorem}\label{thm:reduction}
    When $S_u=0$ and $S_x=0$, $Y_c^2=V_c$ for all $A\in\R^{N_x \times N_x}$, $B \in \R^{N_x \times N_u}$.
\end{theorem}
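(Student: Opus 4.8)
My plan is to show that when there are no support constraints ($S_u = 0$, $S_x = 0$), the optimization defining $h_c(x_0)$ in \eqref{eq:controllability_SLS} reduces to the classical minimum-energy control problem, whose optimal value is the well-known quadratic form $x_0^\top (A^T)^\top W_c^{-1} A^T x_0$ (possibly on the appropriate subspace). First I would observe that with $S_u = 0$ and $S_x = 0$, the only surviving rows of the constraint $S_c \vec\phi_u + C_c = 0$ are those coming from $D_2$ and $\hat A^T \vec I$; i.e., the constraint collapses to $D_2 \vec\phi_u + \hat A^T \vec I = 0$, which is exactly the vectorized terminal condition $\sum_{i=0}^{T-1} A^{T-1-i} B \phi_u(i) + A^T = 0$, equivalently $x_T = A^T x_0 + \sum_i A^{T-1-i} B u(i) = 0$ upon right-multiplying by $x_0$ and recalling $u(i) = \phi_u(i) x_0$. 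So $h_c(x_0)$ is precisely the minimum control energy $\sum_i \|u(i)\|^2 = \sum_i x_0^\top \phi_u(i)^\top \phi_u(i) x_0$ needed to steer $x_0$ to the origin in $T$ steps.

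**Identifying the Gramian.** Next I would invoke the classical solution to the minimum-energy steering problem: writing the reachability map $\mathcal{C}_T = [A^{T-1}B \ \cdots \ AB \ B]$, the minimal energy to reach $-A^T x_0$ from the origin is $(A^T x_0)^\top (\mathcal{C}_T \mathcal{C}_T^\top)^{+} (A^T x_0) = (A^T x_0)^\top W_c^{+} (A^T x_0)$, since $W_c = \mathcal{C}_T \mathcal{C}_T^\top$. This is the standard least-norm solution of the underdetermined linear system; I would either cite it from \cite{kailath_linear_1980} or derive it in one line via Lagrange multipliers. Thus $h_c(x_0) = x_0^\top (A^T)^\top W_c^{+} A^T x_0$, a quadratic form with matrix $(A^T)^\top W_c^{+} A^T$.

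**From the sublevel set to the determinant.** The set $\{x_0 : h_c(x_0) \le 1\}$ is an ellipsoid (or degenerate cylinder) in $x_0$-space; applying the linear map $x_0 \mapsto A^T x_0$ gives the set whose volume is $Y_c$. I would compute this volume directly: $\{A^T x_0 : x_0^\top (A^T)^\top W_c^{+} A^T x_0 \le 1\}$. When $A$ is invertible this is simply $\{z : z^\top W_c^{+} z \le 1\}$, an ellipsoid whose volume is $\omega_{N_x} \sqrt{\det W_c}$ where $\omega_{N_x}$ is the unit-ball volume; so $Y_c = \sqrt{\det W_c} = \sqrt{V_c}$, giving $Y_c^2 = V_c$. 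The claim asserts this for \emph{all} $A$, so I must handle singular $A$ and/or singular $W_c$: when $A$ is singular, the image of the (possibly unbounded) sublevel set under $A^T$ must still have volume equal to $\sqrt{\det W_c}$, and when $W_c$ is singular both sides are zero. I expect the main obstacle to be precisely this degenerate bookkeeping — reconciling the pseudoinverse, the unbounded directions of the $x_0$-sublevel set, and the rank-deficiency of $A^T$ so that the image volume comes out to exactly $\sqrt{\det W_c}$ in all cases, rather than something like $\sqrt{\det W_c}$ times a spurious factor. A clean way to sidestep this is to first reduce to the controllable, invertible-$A$ generic case, argue the identity $Y_c^2 = V_c$ there, and then extend by continuity in $(A,B)$ since both $Y_c^2$ and $V_c$ are continuous (indeed polynomial/continuous) functions of the entries of $A$ and $B$ and the generic case is dense.
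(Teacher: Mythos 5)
Your proposal follows essentially the same route as the paper's proof: with $S_x=S_u=0$ the constraint collapses to the terminal condition $A^Tx_0+Du=0$ with $D=[A^{T-1}B\ \cdots\ B]$, the least-norm control gives $h_c(x_0)=\norm{D^\dagger A^Tx_0}^2=(A^Tx_0)^\top(DD^\top)^{+}(A^Tx_0)$, and the volume of the resulting ellipsoid in the $A^Tx_0$ variable is $\sqrt{\det DD^\top}=\sqrt{V_c}$. The only difference is that you explicitly flag and handle the degenerate cases (singular $A$ or $W_c$, and the unit-ball normalization constant) via a continuity/genericity argument, which the paper's proof silently glosses over; this is added care rather than a different method.
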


\begin{proof}
    When the constraint set is empty, that is, when $S_x$ and $S_u$ are zero, the remaining constraint in $S_c$ and $C_c$ is the terminal constraint $x_T=0$.
    Using $u=\phi_u x_0$, we can rewrite the objective of \eqref{eq:controllability_SLS} as $\norm{u}^2$. The constraint $S_c \vec{\phi}_u + C_c = 0$ can be written as $A^T x_0 + D u = 0$. Here, $D$ takes the form $D=[A^{T\tminus1}B\ \dots B]$ and $u=[u(0),\dots,u(T\tminus1)]$. The optimal $u$ is given by
    \begin{align*}
        u = -D^\dagger A^T x_0\,.
    \end{align*}
    Plugging this into $h_c(x_0)$ results in 
    \begin{align*}
        h_c(x_0) = x_0^\top \left(A^{T^\top} D^{\dagger^\top}  D^\dagger A^T  \right) x_0 =  \norm{D^\dagger A^T x_0}_2^2\,.
    \end{align*}
    To compute the volume of all $x_0$ such that $h_c(x_0) \le 1$, note that the volume in terms of a new variable $y=A^T x_0$ is the volume of an ellipse defined by $D^\dagger$:
    \begin{align*}
        \text{volume}\bigg\{A^T x_0\ \bigg|\ h_c(x_0)\le 1 \bigg\} = \text{volume}\bigg\{y \bigg| \norm{D^\dagger y}^2 \le 1 \bigg\}  =\sqrt{\det D D^\top}\,,
    \end{align*}
    and using the definition of $D$ results in
    $Y_c^2 = \sum_{i=0}^{T-1} A^i B B^\top (A^\top)^i = V_c\,.$
\end{proof}

\subsection{Computing Constrained Controllability}

Our second contribution is a novel algorithm (Algorithm \ref{alg:ellipsoid-volume}) to calculate controllability with constraints in closed form.

\begin{algorithm}
\caption{Constrained Controllability}\label{alg:ellipsoid-volume}
\begin{algorithmic}[1]
\Require Horizon $T$; Constraint matrices $S_c$ and $C_c$.
\State Decompose $S_c \vec{\phi}_u + C_c = 0$ as
$S_i \Phi_i + C_i = 0,\ i \in [N_x].$\label{alg:ellipsoid-volume:s1}
\Comment{$\Phi_i$ contains the entries in $\vec{\phi}_u$ that correspond to $x_0[i]$.}
\State Compute the general solution $\Phi_i = \omega_i + V_i \cdot \lambda_i, \ i \in [N_x]$.\label{alg:ellipsoid-volume:s2}
\Comment{$\lambda_i \in \mathbb{R}^{r_i}$ and $V_i \in \mathbb{R}^{(TN_u) \times r_i}$ has full column rank.}
\State Define $M = [V_1, V_2, \cdots, V_n] \in \mathbb{R}^{(TN_u) \times (\sum_{i = 1}^{N_x}r_i)}$. \label{alg:ellipsoid-volume:s3}
\State Define $N = [\omega_1, \omega_2, \cdots, \omega_n] \in \mathbb{R}^{(TN_u) \times N_x}$. \label{alg:ellipsoid-volume:s4}
\State Return the volume \label{alg:ellipsoid-volume:s5}
\[\det(A^T) / \sqrt{\det((- M M^\dagger N + N)^\top (- M M^\dagger N + N))}.\]
\end{algorithmic}
\end{algorithm}

Algorithm \ref{alg:ellipsoid-volume} starts by eliminating the linear constraints in \eqref{eq:controllability_SLS} to reformulate it as an unconstrained optimization problem. Specifically, we first decompose $S_c \vec{\phi}_u + C_c = 0$ as $n$ independent sets of linear constraints $S_i \Phi_i + C_i = 0, \text{ for } i \in [N_x]$ in Step \ref{alg:ellipsoid-volume:s1}. 
Here, $\Phi_i \in \mathbb{R}^{T N_u}$ contains the entries in $\vec{\phi}_u$ that correspond to $\phi_u(t)[i, :]$ for $t < T$. Such decomposition can always be implemented due to the special structure of the support constraints induced by actuation delay, communication speed, and locality constraints. We state this result formally in Lemma \ref{lemma:independent-constraints}.

\begin{lemma}\label{lemma:independent-constraints}
For any $k$, the constraint $(S_c \vec{\phi}_u + C_c)[k] = 0$ cannot depend on entries from both $\Phi_i$ and $\Phi_j$ ($i \not = j$).
\end{lemma}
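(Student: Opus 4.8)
The plan is to trace through the structure of the constraint system $S_c \vec{\phi}_u + C_c = 0$ row by row and show that every row involves only variables $\phi_u(t)[i,:]$ for a single fixed $i$. Recall from the definition of $S_c$ and $C_c$ that the constraint stack has three blocks: the support constraints $S_u \vec{\phi}_u = 0$, the block $D_1 \vec{\phi}_u + v_1 = 0$ coming from the support constraints on $\phi_x$ together with the recursion $\phi_x(t+1) = A\phi_x(t) + B\phi_u(t)$, and the block $D_2 \vec{\phi}_u + \hat{A}^T \vec{I} = 0$ enforcing the terminal condition $x_T = 0$. I would handle these three blocks in turn.

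First I would observe the key algebraic fact underlying the decomposition: writing $x(t) = \phi_x(t) x_0$ and $u(t) = \phi_u(t) x_0$ columnwise, the dependence of $x(t)[j]$ and $u(t)[j]$ on $x_0[i]$ is governed entirely by column $i$ of $\phi_x(t)$ and $\phi_u(t)$; that is, the $i$-th column of $\phi_x(t)$ is a linear function only of the $i$-th columns of $\phi_u(0),\dots,\phi_u(t-1)$, via $\phi_x(t)[:,i] = A^t I[:,i] + \sum_{s=0}^{t-1} A^{t-1-s} B \phi_u(s)[:,i]$. Since $\Phi_i$ is precisely the collection of $i$-th columns of $\phi_u(0),\dots,\phi_u(T-1)$ (in vectorized form), this already shows that the terminal-condition block $D_2$ decouples: the $i$-th column of $x_T = A^T x_0$'s constraint reads $A^T I[:,i] + \sum_{s} A^{T-1-s} B \phi_u(s)[:,i] = 0$, which involves only $\Phi_i$. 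The same recursion shows the $D_1$/$v_1$ block decouples column-by-column as well. For the $S_u$ block, I would invoke the structural assumption that the support constraints arising from actuation delay, communication speed, and locality (as spelled out in Section~\ref{sec:constrained_systems}) are constraints of the form $\phi_u(t)[j,i] = 0$, i.e., each such constraint names a single entry and hence a single column index $i$; therefore each row of $S_u \vec{\phi}_u = 0$ lies entirely within one $\Phi_i$.

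The main obstacle — and the step I would be most careful about — is the $D_1$ block, because its rows are indexed by the support constraints $S_x[t]$ on $\phi_x(t)$, and I need to confirm that a row of $S_x[t]$ picks out a single column of $\phi_x(t)$, which then (via $\phi_x(t)[:,i]$ depending only on $\{\phi_u(s)[:,i]\}_{s<t}$) pulls in variables from only $\Phi_i$. Concretely, $(S_x[t] \hat{A}^{t-1-s}\hat{B}) \vec{\phi}_u(s)$ must be shown to touch only the column-$i$ entries of $\phi_u(s)$ when the corresponding row of $S_x[t]$ constrains $\phi_x(t)[\cdot,i]$; this follows because $\hat{B}$ and $\hat{A}$ are the vectorized forms of left-multiplication, which act within each column and never mix columns. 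I would also need to double-check the constant terms $v_1$: the $t$-th block $S_x[t]\hat{A}^{t}\vec{I}$ contributes only to those rows, consistently with the same column index, so it does not break the decoupling. Once all three blocks are shown to respect the column partition, the lemma follows: no row of $S_c \vec{\phi}_u + C_c = 0$ can mention entries from both $\Phi_i$ and $\Phi_j$ with $i \neq j$, so the system splits into the $N_x$ independent subsystems $S_i \Phi_i + C_i = 0$ used in Step~\ref{alg:ellipsoid-volume:s1} of Algorithm~\ref{alg:ellipsoid-volume}.
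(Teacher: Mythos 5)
Your proposal is correct and takes essentially the same route as the paper: every constraint row is either a single-entry support constraint on $\phi_u$ (hence within one $\Phi_i$) or a constraint on an entry of $\phi_x(t)$ (including the terminal condition $x_T=0$), and the columnwise SLS recursion $\phi_x(t)[:,i]=A^t I[:,i]+\sum_{s<t}A^{t-1-s}B\,\phi_u(s)[:,i]$ shows the latter involves only column $i$ of the $\phi_u(s)$'s, i.e., only $\Phi_i$. Your write-up simply unrolls this block-by-block through $S_u$, $D_1/v_1$, and $D_2$, which is a more explicit rendering of the same argument.
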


\begin{proof}
Under the definitions of constraints on actuation delay, communication speed, and locality in \Cref{sec:constrained_systems}, the constraint $(S_c \vec{\phi}_u + C_c)[k] = 0$ corresponds to either $\vec{\phi}_u[\tau] = 0$ or $\vec{\phi}_x[\tau] = 0$ for some index $\tau \in [TN_u N_x]$. In the first case, it satisfies the statement of \Cref{lemma:independent-constraints}.

Otherwise, $(S_c \vec{\phi}_u + C_c)[k] = 0$ must correspond to $\vec{\phi}_x[\tau] = 0$ for some index $\tau \in [TN_u N_x]$. Suppose it is converted from $\phi_x(t)[j, i] = c$, where $c \in \{0, 1\}$. By \eqref{eq:SLS_dynamics}, we see that $\phi_x(\tau + 1)[:, i] = A \phi_x(\tau)[:, i] + B \phi_u(\tau)[:, i]$ for all $\tau < t$. Therefore, $\phi_x(t)[j, i]$ can be expressed as a linear combination of $\{\phi_u(\tau)[:, i]\mid 0 \leq \tau < t\}$. Since all entries in $\phi_u(\tau)[:, i]$ are in $\Phi_i$, $(S_c \vec{\phi}_u + C_c)[k] = 0$ only involves entries from $\Phi_i$.
\end{proof}

After the decomposition, Step \ref{alg:ellipsoid-volume:s2} solves the general solution $\Phi_i = \omega_i + V_i \cdot \lambda_i$ of each constraint set $S_i \Phi_i + C_i = 0$, so we can change the variables from $\vec{\phi}_u$ to $\{\lambda_i\}_{i \in [N_x]}$ in the optimization problem \eqref{eq:controllability_SLS}. With the matrices $M$ and $N$ defined in Steps \ref{alg:ellipsoid-volume:s3}-\ref{alg:ellipsoid-volume:s4}, we see $h_c(x_0)$ can be expressed as
\begin{align}\label{equ:unconstrained-volume}
    \min_{\lambda_1, \ldots, \lambda_{N_x}} \norm{N x_0 + M (x_0[1] \lambda_1, \ldots, x_0[N_x] \lambda_{N_x})^\top}_2^2.
\end{align}

Finally, we conclude the discussion with a proposition that shows the output of Algorithm \ref{alg:ellipsoid-volume} is equal to the constrained controllability $Y_c$ defined in \eqref{eq:controllability_SLS}.

\begin{proposition}\label{prop:ellipsoid-algorithm}
The constrained controllability $Y_C$ satisfies $Y_c = \text{volume}\left\{A^T x_0 : \norm{(- M M^\dagger N + N)x_0}_2^2 \leq 1\right\}$, which is the output of Algorithm \ref{alg:ellipsoid-volume}.
\end{proposition}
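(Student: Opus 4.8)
The plan is to establish the two equalities in the statement in turn: first that $Y_c=\text{volume}\{A^Tx_0:\norm{Px_0}_2^2\le1\}$, where $P\coloneqq -MM^\dagger N+N$, using the reformulation \eqref{equ:unconstrained-volume} together with a least-squares computation; and then that $\text{volume}\{A^Tx_0:\norm{Px_0}_2^2\le1\}=|\det A^T|/\sqrt{\det(P^\top P)}$, using the change-of-variables formula for the volume of a linear image of an origin-centered ellipsoid. Throughout I take the constraints $S_i\Phi_i+C_i=0$ of Step~\ref{alg:ellipsoid-volume:s2} to be feasible (equivalently, the minimization in \eqref{eq:controllability_SLS} is over a nonempty set), which is implicit in the algorithm being well-posed; otherwise $Y_c=0$ and the statement is vacuous.

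For the first equality, I would fix $x_0\in\R^{N_x}$ with all coordinates nonzero. For such $x_0$ each map $\lambda_i\mapsto x_0[i]\lambda_i$ is a linear bijection of $\R^{r_i}$, so as $(\lambda_1,\dots,\lambda_{N_x})$ ranges over $\R^{r_1}\times\cdots\times\R^{r_{N_x}}$ the stacked vector $\mu\coloneqq(x_0[1]\lambda_1,\dots,x_0[N_x]\lambda_{N_x})^\top$ ranges over all of $\R^{\sum_i r_i}$. Hence \eqref{equ:unconstrained-volume} reduces to the unconstrained least-squares problem $h_c(x_0)=\min_{\mu}\norm{Nx_0+M\mu}_2^2$, whose optimal value is $\norm{(I-MM^\dagger)Nx_0}_2^2=\norm{Px_0}_2^2$, since $MM^\dagger$ is the orthogonal projector onto the column space of $M$. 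Thus $h_c(x_0)=\norm{Px_0}_2^2$ for every $x_0$ off the coordinate hyperplanes; on those hyperplanes only $h_c(x_0)\ge\norm{Px_0}_2^2$ holds in general, because the substitution $\mu=(x_0[1]\lambda_1,\dots,x_0[N_x]\lambda_{N_x})^\top$ no longer sweeps out all of $\R^{\sum_i r_i}$.

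To upgrade this pointwise identity to an identity of volumes, I would note that the set $Z\subset\R^{N_x}$ of points with at least one zero coordinate is a finite union of hyperplanes, hence Lebesgue-null, so $A^T(Z)$ is Lebesgue-null in $\R^{N_x}$; a short check then shows $\{A^Tx_0:h_c(x_0)\le1\}$ and $\{A^Tx_0:\norm{Px_0}_2^2\le1\}$ have symmetric difference contained in $A^T(Z)$ (any point of the difference is witnessed by some $x_0$ forced to lie in $Z$), so the two sets have equal volume, which is the first equality. For the second equality I split on $\det A^T$. If $A^T$ is singular, both sets lie in the proper subspace $\mathrm{range}(A^T)$ and have volume $0$, matching the algorithm since then $\det A^T=0$. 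If $A^T$ is invertible, the substitution $y=A^Tx_0$ identifies $\{A^Tx_0:\norm{Px_0}_2^2\le1\}$ with the origin-centered ellipsoid $\{y:y^\top A^{-1}P^\top PA^{-\top}y\le1\}$, whose volume — the product of its semi-axis lengths, the convention used in the proof of Theorem~\ref{thm:reduction} — equals $1/\sqrt{\det(A^{-1}P^\top PA^{-\top})}=|\det A^T|/\sqrt{\det(P^\top P)}$, which is exactly the quantity returned in Step~\ref{alg:ellipsoid-volume:s5}. The subcase where $P$ is column-rank-deficient is handled uniformly: $h_c$ vanishes on a nontrivial subspace, the ellipsoid is an unbounded cylinder, $\det(P^\top P)=0$, and both sides are $+\infty$.

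I expect the only genuinely delicate point to be the passage from the pointwise identity $h_c(x_0)=\norm{Px_0}_2^2$ to the identity of volumes: the identity holds only off the coordinate hyperplanes — precisely where the change of variables $\lambda_i\mapsto x_0[i]\lambda_i$ degenerates — and one must argue that this lower-dimensional exceptional set does not affect the volume of the (almost-)ellipsoidal region in question. The remaining ingredients (the least-squares/projection identity, the volume of a linear image of an ellipsoid, and the bookkeeping of the degenerate cases and of the semi-axis volume convention) are routine.
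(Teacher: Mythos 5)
Your proposal is correct and follows essentially the same route as the paper's proof: reduce $h_c(x_0)$ to the unconstrained least-squares problem via the change of variables $\mu=(x_0[1]\lambda_1,\dots,x_0[N_x]\lambda_{N_x})^\top$, valid off the coordinate hyperplanes, apply the projection identity to get $\norm{(-MM^\dagger N+N)x_0}_2^2$, and note the exceptional set is Lebesgue-null so the volumes agree. Your additional care with the symmetric difference, the degenerate cases, and the explicit ellipsoid-volume computation only fills in details the paper leaves implicit.
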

\begin{proof}
By \eqref{equ:unconstrained-volume}, we see $h_c(x_0) = \min_{y} \norm{N x_0 + M y}_2^2$ holds for all $x_0$ that satisfies $x_0[i] \not = 0$ for all $i \in [N_x]$, because we can do the change of variables
\[y \coloneqq (x_0[1] \lambda_1, \ldots, x_0[N_x] \lambda_{N_x})^\top \in \mathbb{R}^{\sum_{i=1}^{N_x} r_i}.\]
Since this equation holds almost everywhere, we obtain that
\begin{align*}
    Y_c ={}& \text{volume}\left\{A^T x_0 : \min_{y} \norm{N x_0 + M y}_2^2 \leq 1\right\}\\
    ={}& \text{volume}\left\{A^T x_0 : \norm{(- M M^\dagger N + N)x_0}_2^2 \leq 1\right\},
\end{align*}
which is equal to the return value of Algorithm \ref{alg:ellipsoid-volume}.
\end{proof}

It is worth noting that the time complexity of \Cref{alg:ellipsoid-volume} can grow polynomially in time horizon $T$ and system dimensions $(N_x, N_u)$. In Section~\ref{subsec:computation_speed}, we show that the rank condition is faster than the volume computation, with the gap increasing as the state size increases.

\subsection{Maintaining Performance Under Constraints}
An important question for system design is whether the performance of the unconstrained system is preserved after imposing additional constraints on actuation delay, communication speed, and locality. While a straightforward answer is to compute the constrained controllability $Y_c$ analytically with \Cref{alg:ellipsoid-volume}, it can be inefficient for large-scale systems with high-dimensional state spaces. To address this challenge, we provide sufficient conditions under which
\begin{enumerate}
    \item adding constraints does not decrease controllability;
    \item constrained systems achieve the optimal unconstrained convex cost.
\end{enumerate}
More importantly, they can be verified efficiently. Our result is inspired by a recent work \cite{li_global_2023} on comparing localized/global model predictive control (MPC), with the critical difference that we require $x_T = 0$ in \eqref{eq:controllability_SLS}. While \cite{li_global_2023} requires that one must check the conditions hold for each $x_0$, we prove that if our conditions are satisfied, the performance is maintained for $x_0$ almost everywhere (a.e.).

First, we define sets $\cS(x_0)$ and $\T(x_0)$ as the set of all feasible states and control inputs under the dynamics~\eqref{eq:dynamics} and initial condition $x_0$, without and with constraints. Let 
    \begin{align*}
        Z_{AB0}\coloneqq \begin{bmatrix}
            I & & &\vline & 0 & & &\\
            -A & I & &\vline &-B & 0 & &\\
            & \ddots & &\vline & & \ddots & &\\
            & -A & I &\vline & & & -B & 0\\
            & & -A  &\vline & & & & -B
        \end{bmatrix}\,,
    \end{align*}
    such that $Z_{AB0}\in\R^{N_x T \times (N_x+N_u)T}$. Let $Z_p\coloneqq Z_{AB0}^\dagger \begin{bmatrix}
        I \\ 0
    \end{bmatrix}$, $Z_h\coloneqq I- Z_{AB0}^\dagger  Z_{AB0}$. Let $Z_h^{blk} = \text{blkdiag}(Z_h, \ldots, Z_h)$, repeated $N_x$ times. The indices $R$ are defined as the indices of $\vec \phi$ where the entry is constrained to be zero. Denote the matrix $F=Z_h^{blk}[R]$; due to the structure of $Z_h^{blk}$, $F$ has the form $F =\text{blkdiag}(\{Z_h[R_i]\}_{i\in[N_x]})$, where each $R_i$ is the subset of indices in $R$ that corresponds to the $i^{th}$ block of $Z_h^{blk}$. Define $x_0 \coloneqq [x_0[1]I\ x_0[2]I\ \dots\ x_0[N_x]I]$. We use the results of the following lemmas to prove the main results. Finally, let $H \coloneqq [H_1,\dots,H_{N_x}]$ where $H_i = I-F[R_i]^\dagger F[R_i]$.
\begin{lemma}\label{lem:Sx0}
    The dimension of $\cS(x_0)$ is given by $\dim \cS(x_0) = \rank{Z_h}\,.$
\end{lemma}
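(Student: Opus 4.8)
The plan is to characterize $\cS(x_0)$ — the set of feasible state trajectories $(x(0),\dots,x(T-1))$ starting from $x_0$ under the unconstrained dynamics \eqref{eq:dynamics} — as an affine subspace, and then compute its dimension. First I would stack the dynamics equations $x(t+1) = Ax(t) + Bu(t)$ together with the initial condition $x(0)=x_0$ into a single linear system. This is exactly what $Z_{AB0}$ encodes: writing $w = [\vec x;\, \vec u]$ for the stacked state and input signals, the constraints "$x(0)=x_0$ and the dynamics hold" become $Z_{AB0}\, w = [x_0;\, 0]$. (One should check the indexing convention in $Z_{AB0}$ matches the dynamics; the first block row enforces $x(0)=x_0$ via the $I$ in the upper-left, and each subsequent block row enforces $-Ax(t) + x(t+1) - Bu(t) = 0$.) The set of all such $w$ is the affine subspace $Z_{AB0}^\dagger[x_0;0] + \ker Z_{AB0} = Z_p x_0 + \mathrm{range}(Z_h)$, using the definitions $Z_p = Z_{AB0}^\dagger[I;0]$ and $Z_h = I - Z_{AB0}^\dagger Z_{AB0}$ (the orthogonal projector onto $\ker Z_{AB0}$).

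Next I would argue that $\cS(x_0)$, being the image of this affine subspace under the linear projection $w \mapsto \vec x$ (discarding the $\vec u$ components), has dimension at most $\rank Z_h$, with equality to be established. The dimension of an affine subspace equals the dimension of its underlying linear subspace, which here is $\mathrm{range}(Z_h)$, of dimension $\rank Z_h$. So it remains to show the projection onto the $\vec x$-coordinates does not collapse any dimension — equivalently, that no nonzero element of $\ker Z_{AB0}$ has all its $\vec x$-components equal to zero. But if $\vec x = 0$ identically, then the dynamics rows force $Bu(t) = x(t+1) - Ax(t) = 0$ for all $t$; however, $\vec u$ ranges freely only modulo $\ker B$-type directions, and crucially the map $w\mapsto \vec x$ restricted to $\ker Z_{AB0}$ need not be injective in general. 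The cleaner route is to instead observe that $\cS(x_0)$ is precisely $\{\phi_x x_0 : \phi \in \text{(unconstrained SLS subspace)}\}$ via the SLS parametrization \eqref{eq:SLS_dynamics}, but I think the most direct approach is to note that the lemma as stated identifies $\dim\cS(x_0)$ with $\rank Z_h$ for $x_0$ generic (a.e.), and for generic $x_0$ the affine shift $Z_p x_0$ does not reduce dimension, so $\dim\cS(x_0) = \dim \mathrm{range}(Z_h) = \rank Z_h$.

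The main obstacle I anticipate is handling the projection from the full variable $w=[\vec x;\vec u]$ down to $\vec x$ carefully — i.e., justifying that the feasible-$\vec x$ set has the same dimension as $\ker Z_{AB0}$ rather than strictly smaller. The resolution should use the specific bidiagonal structure of $Z_{AB0}$: given any feasible $\vec x$, the required inputs $\vec u$ are determined up to the kernel of the $B$-blocks, but the statement counts dimensions of $\cS(x_0) \subseteq \R^{N_x T}$ (state trajectories only), so one must show that distinct kernel directions of $Z_{AB0}$ that differ only in their $\vec u$-part are — for the purposes of this count — already accounted for, OR that the lemma implicitly works with the joint $(\mathbf{x},\mathbf{u})$ feasible set. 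Reading the surrounding text ("the set of all feasible states and control inputs"), $\cS(x_0)$ is the joint set, so no projection is needed at all: $\cS(x_0) = Z_p x_0 + \mathrm{range}(Z_h)$ directly, an affine subspace of dimension $\rank Z_h$, and the lemma follows immediately. The only remaining care is confirming $Z_{AB0} Z_p x_0 = [x_0;0]$ so that $\cS(x_0)$ is nonempty and the affine description is valid, which holds because $Z_{AB0}$ has full row rank (its leading block columns form an invertible lower-triangular matrix with $I$'s on the diagonal), making $Z_{AB0} Z_{AB0}^\dagger = I$.
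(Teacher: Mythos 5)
Your final reading of $\cS(x_0)$ (the joint state-and-input set, so no projection onto $\mathbf{x}$ is needed) is the right one, and with that in place your argument is correct and lands essentially where the paper does, by a slightly more direct route. The paper solves $Z_{AB0}\phi = [I;0]$ for the closed-loop map, writes the general solution $\phi = Z_p + Z_h\Lambda$, forms $\mathbf{s} = \phi x_0$, and then uses the fat matrix $x_0 = [x_0[1]I\ \dots\ x_0[N_x]I]$ to rewrite $Z_h \Lambda x_0$ as $Z_h x_0 \lambda$, concluding $\dim\cS(x_0) = \rank{Z_h x_0} = \rank{Z_h}$ since that fat matrix has full row rank for $x_0 \ne 0$. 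You instead solve $Z_{AB0} w = [x_0;0]$ directly in the trajectory variable $w = [\mathbf{x};\mathbf{u}]$, obtaining $\cS(x_0) = Z_p x_0 + \mathrm{range}(Z_h)$, which makes the dimension count immediate and skips the vectorization/fat-$x_0$ step; the two descriptions coincide because $\{\Lambda x_0 : \Lambda\}$ is all of $\R^{(N_x+N_u)T}$ when $x_0 \neq 0$. The paper's $\phi$-based setup is not gratuitous, though: it is the form that carries over to Lemma~\ref{lem:Tx0}, where the additional constraints act on $\vec\phi$ and the $x_0$-matrix manipulation becomes essential.

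Two points need fixing. First, you misread the last block row of $Z_{AB0}$: it is $[\,0\ \cdots\ {-A} \mid 0\ \cdots\ {-B}\,]$ with no identity, so it encodes the terminal condition $x(T) = A x(T-1) + B u(T-1) = 0$, not another interior dynamics step; $\cS(x_0)$ here includes $x_T = 0$, which is exactly why these lemmas connect to the controllability definition. Second, and as a consequence, your justification that $Z_{AB0}$ has full row rank is wrong: its left block is $(T+1)N_x \times T N_x$ and is not an invertible lower-triangular matrix. A short left-null-space computation ($\eta_t = A^\top \eta_{t+1}$, $B^\top \eta_{t+1} = 0$) shows $Z_{AB0}$ has full row rank iff $[B\ AB\ \cdots\ A^{T-1}B]$ has full row rank, and consistency of $Z_{AB0} w = [x_0;0]$ — i.e., nonemptiness of $\cS(x_0)$ — holds iff $A^T x_0 \in \mathrm{range}[B\ AB\ \cdots\ A^{T-1}B]$, i.e., $x_0$ can be steered to the origin in $T$ steps. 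If the system is inconsistent, $Z_p x_0 + \mathrm{range}(Z_h)$ is merely the set of least-squares solutions and is not $\cS(x_0)$. The paper's own proof glosses over this as well (it writes the "general solution" without checking consistency, with feasibility supplied by the hypothesis of Theorem~\ref{thm:optimality_constrained_systems} downstream), so in your write-up you should either state feasibility as an assumption or drop the full-row-rank claim rather than support it with an incorrect argument.
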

\begin{lemma}\label{lem:Tx0}
    The dimension of $\T(x_0)$ is given by $\dim \T(x_0) = \rank{Z_h x_0 (I-F^\dagger F)}\,.$
\end{lemma}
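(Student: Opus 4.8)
The plan is to follow the same route as the proof of Lemma~\ref{lem:Sx0}, but carry the support constraints through the calculation. First I would use the SLS parametrization to write $\T(x_0)=\{\phi\, x_0 : \phi\in\mathcal P\}$, where $\mathcal P\coloneqq\{\phi : Z_{AB0}\,\phi=\begin{bmatrix}I\\0\end{bmatrix},\ \vec\phi[R]=0\}$ is the set of admissible closed-loop maps: $Z_{AB0}\,\phi=\begin{bmatrix}I\\0\end{bmatrix}$ is the closed-loop-map form of the SLS recursion~\eqref{eq:SLS_dynamics}, the identity initial condition, and the terminal constraint (the same dynamical constraints underlying $\cS(x_0)$ in Lemma~\ref{lem:Sx0}), and $\vec\phi[R]=0$ collects the actuation-delay, communication, and locality support constraints. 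Since $\phi\mapsto\phi\, x_0$ is linear and $\mathcal P$ is affine, $\T(x_0)$ is affine, and (assuming $\mathcal P\neq\emptyset$, as in Lemma~\ref{lem:Sx0}) fixing some feasible $\Phi^\star\in\mathcal P$ reduces the claim to computing the dimension of its direction space,
\[
\dim\T(x_0)=\dim\bigl\{\delta\, x_0 : Z_{AB0}\,\delta=0,\ \vec\delta[R]=0\bigr\}.
\]

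Next I would vectorize. Stacking columns gives $\delta\, x_0 = x_0\,\vec\delta$, where on the right $x_0$ denotes the matrix $[x_0[1]I\ \cdots\ x_0[N_x]I]$ defined just before the lemma. Now $Z_{AB0}\,\delta=0$ holds iff every column of $\delta$ lies in $\ker Z_{AB0}=\mathrm{range}\,Z_h$, i.e. iff $\vec\delta\in\mathrm{range}\,Z_h^{blk}$; writing $\vec\delta=Z_h^{blk}\nu$ and imposing $\vec\delta[R]=0$ gives $(Z_h^{blk}\nu)[R]=F\nu=0$, so $\nu\in\ker F=\mathrm{range}(I-F^\dagger F)$. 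Conversely $F(I-F^\dagger F)=0$, so $\vec\delta=Z_h^{blk}(I-F^\dagger F)\tilde\nu$ satisfies both constraints for every $\tilde\nu$. Hence the admissible $\vec\delta$ are exactly $\mathrm{range}\bigl(Z_h^{blk}(I-F^\dagger F)\bigr)$, and so $\dim\T(x_0)=\rank{x_0\, Z_h^{blk}\,(I-F^\dagger F)}$.

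Finally I would remove the block structure using the identity $x_0\,Z_h^{blk}=Z_h\,x_0$, which holds because $Z_h^{blk}=\text{blkdiag}(Z_h,\dots,Z_h)$ makes the $i$-th column block of $x_0\,Z_h^{blk}$ equal to $x_0[i]I\cdot Z_h=x_0[i]Z_h$, matching the $i$-th block of $Z_h\,x_0$. Substituting gives $\dim\T(x_0)=\rank{Z_h\,x_0\,(I-F^\dagger F)}$, the claimed formula. As a consistency check, when $R=\emptyset$ we have $F^\dagger F=0$, and since the columns of the matrix $x_0$ span $\R^{(N_x+N_u)T}$ whenever $x_0\neq0$, this recovers $\rank{Z_h\,x_0}=\rank{Z_h}$, i.e. Lemma~\ref{lem:Sx0}.

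The step I expect to be the main obstacle is the first one: showing that $\T(x_0)$, the set of state/input trajectories realizable by controllers obeying the prescribed locality, delay, and communication structure, really equals $\{\phi\, x_0:\phi\in\mathcal P\}$. This needs the constrained version of the SLS achievability theorem (admissible closed-loop maps parametrize all admissible responses) together with the fact from Lemma~\ref{lemma:independent-constraints} that the support constraints decouple across the columns of $\phi$, which is exactly what lets $Z_{AB0}\,\phi=\begin{bmatrix}I\\0\end{bmatrix}$ split into $N_x$ independent column problems and gives $F=Z_h^{blk}[R]$ the block-diagonal structure used above. Everything after that reduction is linear algebra with orthogonal projectors.
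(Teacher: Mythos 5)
Your proposal is correct and takes essentially the same route as the paper's proof: parametrize the closed-loop maps satisfying the dynamics through $Z_h$ (the projector onto $\ker Z_{AB0}$), impose the support constraints through the projector $I-F^\dagger F$ onto $\ker F$, and move $x_0$ through the vectorization via $x_0 Z_h^{blk}=Z_h x_0$ to read off $\rank{Z_h x_0(I-F^\dagger F)}$. The only difference is presentational: you argue on the homogeneous direction space of the affine set of admissible $\phi$, whereas the paper carries the particular solution $\vec\Lambda=-\hat Z_h[R]^\dagger \hat Z_p[R]+(I-F^\dagger F)\mu$ explicitly, so your write-up just makes explicit a few steps the paper elides.
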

See the appendix for proofs, which follow closely from \cite{li_global_2023}.
\begin{theorem}[Optimality of Constrained Systems]\label{thm:optimality_constrained_systems} 
    For any constraint set $S_x$, $S_u$, if there exists a feasible solution $\phi\in \mathcal{S}$ satisfying \eqref{eq:SLS_dynamics}, $x_T=0$, and $\rank{Z_h} = \rank{Z_hH}$, then $\cS(x_0) = \T(x_0)$ for all $x_0$ almost everywhere.
\end{theorem}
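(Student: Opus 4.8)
The plan is to reduce the equality $\cS(x_0) = \T(x_0)$ to a comparison of dimensions, since $\T(x_0) \subseteq \cS(x_0)$ always holds (the constrained feasible set is a subset of the unconstrained one). Thus it suffices to show that $\dim \T(x_0) = \dim \cS(x_0)$ for almost every $x_0$. By \Cref{lem:Sx0} the right-hand side equals $\rank{Z_h}$, a quantity independent of $x_0$, and by \Cref{lem:Tx0} the left-hand side equals $\rank{Z_h x_0 (I - F^\dagger F)}$. So the whole theorem comes down to showing that the hypothesis $\rank{Z_h} = \rank{Z_h H}$ (plus feasibility $\phi \in \mathcal{S}$) forces $\rank{Z_h x_0 (I - F^\dagger F)} = \rank{Z_h}$ for a.e.\ $x_0$.

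The first step is to understand $I - F^\dagger F$ in terms of $H$. Recall $F = \text{blkdiag}(\{Z_h[R_i]\}_{i\in[N_x]})$ is block diagonal, so $I - F^\dagger F = \text{blkdiag}(\{I - Z_h[R_i]^\dagger Z_h[R_i]\}_{i\in[N_x]})$; meanwhile $H = [H_1, \dots, H_{N_x}]$ with $H_i = I - F[R_i]^\dagger F[R_i] = I - Z_h[R_i]^\dagger Z_h[R_i]$ — so the $H_i$ are exactly the diagonal blocks of $I - F^\dagger F$, laid side by side. Then $Z_h x_0 (I - F^\dagger F)$, with $x_0 = [x_0[1] I\ \cdots\ x_0[N_x] I]$, expands to the horizontal concatenation $[\,x_0[1] Z_h H_1\ \cdots\ x_0[N_x] Z_h H_{N_x}\,]$, whose column space is $\sum_i \text{colspace}(x_0[i] Z_h H_i)$. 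For $x_0$ with all nonzero entries (which is a.e.), this is $\sum_i \text{colspace}(Z_h H_i) = \text{colspace}(Z_h H)$. Hence $\rank{Z_h x_0(I - F^\dagger F)} = \rank{Z_h H}$ for a.e.\ $x_0$, and the hypothesis $\rank{Z_h} = \rank{Z_h H}$ immediately gives $\dim \T(x_0) = \dim \cS(x_0)$.

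The remaining point is to go from equality of dimensions to $\T(x_0) = \cS(x_0)$ as sets: since $\T(x_0) \subseteq \cS(x_0)$ and both are affine subspaces (solution sets of consistent linear systems — here feasibility $\phi \in \mathcal{S}$ with $x_T = 0$ guarantees $\T(x_0)$ is nonempty, at least for the $x_0$ realized by the given $\phi$, and an affine-dimension-counting argument extends this to a.e.\ $x_0$), equal dimension forces equality. I would also need to double-check the measure-zero bookkeeping: the set where some $x_0[i] = 0$ is a finite union of hyperplanes and hence null, and I should confirm nonemptiness of $\T(x_0)$ does not fail on a larger set — this is where the feasibility hypothesis $\phi \in \mathcal{S}$ enters, presumably via a genericity argument showing the affine system defining $\T(x_0)$ stays consistent as $x_0$ varies.

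The main obstacle I anticipate is precisely this last measure-theoretic step — ensuring that "$\T(x_0) = \cS(x_0)$ for a.e.\ $x_0$" is genuinely delivered and not just "for a.e.\ $x_0$ such that $\T(x_0) \neq \emptyset$." The dimension computation via the two lemmas is essentially bookkeeping once the block structure of $F$ and $H$ is unwound; the delicate part is arguing that feasibility at a single point $\phi$ propagates to consistency of the constrained system for generic $x_0$, so that the dimension count is actually comparing two nonempty affine spaces almost everywhere.
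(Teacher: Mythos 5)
Your proposal is correct and follows the same skeleton as the paper's proof: both reduce the set equality to a dimension count via Lemmas~\ref{lem:Sx0} and \ref{lem:Tx0}, use $\T(x_0)\subseteq\cS(x_0)$ plus equal dimension to conclude equality of the sets, and identify the crux as showing $\rank{Z_h x_0(I-F^\dagger F)} = \rank{Z_h H}$ away from the hyperplanes $\{x_0[i]=0\}$. Where you genuinely differ is in how that rank identity is established. The paper row-reduces $Z_h H$ into $\hat H$ and runs a linear-independence argument, showing that any vanishing combination of the independent rows of the reweighted matrix forces coefficients $\tilde\lambda_j\tilde x_0[i]=0$ and hence $\tilde\lambda_j=0$. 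You instead observe directly that $Z_h x_0(I-F^\dagger F) = [x_0[1]Z_hH_1\ \cdots\ x_0[N_x]Z_hH_{N_x}]$ and that scaling each column block by a nonzero scalar leaves its column space unchanged, so the total column space equals $\mathrm{colspace}(Z_hH)$ and the ranks agree. This is cleaner than the paper's argument and sidesteps its somewhat delicate bookkeeping with the reweighted $\tilde x_0$. Regarding your residual worry about nonemptiness of $\T(x_0)$: it is not a gap. The feasibility hypothesis is placed on the closed-loop map $\phi$, which does not depend on $x_0$; once a single $\phi\in\mathcal{S}$ satisfying \eqref{eq:SLS_dynamics} and $x_T=0$ exists, the trajectory $\phi x_0$ lies in $\T(x_0)$ for every $x_0$, so consistency of the constrained affine system holds everywhere and no additional genericity argument is required (the paper does not spell this out either).
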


\begin{proof}
    Note that $F$ is a block-diagonal matrix;
    the product $Z_h x_0 (I-F^\dagger F)$ is therefore
    \begin{align*}
        Z_h x_0 (I-F^\dagger F) = [
           Z_h  x_0[1](I - Z_h[R_1]^\dagger Z_h[R_1]) \  \dots \   Z_h x_0[N_x] ( I - Z_h[R_{N_x}]^\dagger Z_h[R_{N_x}])]\,.
    \end{align*}
    Let $a=\rank{Z_h H}$. The matrix $Z_h H$ can be row-reduced into a matrix $\hat H$, where the first $a$ rows of $\hat H$ are linearly independent and the last $N_x-a$ rows are linearly dependent on the first $a$ rows. Select $\lambda\in\R^a$ such that
    \begin{align*}
        \sum_{j=1}^a \lambda_j [\hat H_1[j],\dots,\hat H_{N_x}[j]] = 0\,.
    \end{align*}
    Then because the first $a$ rows of $\hat H$ are linearly independent, $\lambda_j=0$ for $j\in[1,\dots,a]$. For any $x_0$, define $\tilde x_0$ as $x_0$ re-weighted by the row-reduction used to define $\hat H$ from $H$. If $\tilde{x}_0$ satisfies $\tilde x_0[i]\ne0\ \forall\ i\in[N_x]$, then, for any $\tilde \lambda\in\R^a$ such that
    \begin{align*}
        \sum_{j=0}^a \tilde\lambda_j[\tilde x_0[1]\hat H_1[j],\dots,\tilde x_0[N_x] \hat H_{N_x}[j]] = 0\,,
    \end{align*}
    we have that $\tilde \lambda_j \tilde x_0[i]=0 $ for all $i\in[N_x]$ and $j\in[a]$. Because $\tilde x_0[i]\ne 0$ for all $i\in [N_x]$, so $\tilde \lambda_j=0$ for all $j\in[a]$. This results in the desired rank condition $\rank{Z_h x_0 H}=\rank{Z_h}$ for all $x_0$ such that $\tilde x_0[i]\ne 0$ for all $i\in[N_x]$. Note that we do not analyze the linearly dependent rows of $\hat H$ because they will remain linearly dependent when rescaled by $\tilde x_0$. By Lemmas~\ref{lem:Sx0} and \ref{lem:Tx0}, $\dim \cS(x_0)=\dim \T(x_0)$ for $x_0$ almost everywhere. Since $\T(x_0)\subseteq \cS(x_0)$, the same dimensionality implies that the sets are equal.
\end{proof}
Finally, we present a corollary of \Cref{thm:optimality_constrained_systems} that gives criteria for optimality conditions.
\begin{corollary}\label{cor:rank_condition}
    If there exists a feasible solution $\phi\in \mathcal{S}$ and $\rank{Z_h}=\rank{Z_h H}$, then for any convex cost function $f(x,u)$, the unconstrained cost is achieved under the constraints $\phi\in \mathcal{S}$ for $x_0$ almost everywhere. In particular, $V_c=Y_c^2$.
\end{corollary}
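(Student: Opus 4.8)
The plan is to obtain \Cref{cor:rank_condition} as a direct consequence of \Cref{thm:optimality_constrained_systems}, which under the hypotheses ($\phi\in\mathcal{S}$ feasible and $\rank{Z_h}=\rank{Z_hH}$) already gives $\cS(x_0)=\T(x_0)$ for $x_0$ almost everywhere. For the first claim I would simply note that minimizing a fixed objective over two identical sets yields the same optimal value, so for every $x_0$ in the full-measure set where $\cS(x_0)=\T(x_0)$ the constrained optimum $\min_{(x,u)\in\T(x_0)}f(x,u)$ equals the unconstrained optimum $\min_{(x,u)\in\cS(x_0)}f(x,u)$; convexity of $f$ is not strictly needed for this equality, though it is what makes the constrained problem a tractable convex program over $\phi\in\mathcal{S}$. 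Because $Z_{AB0}$, $Z_p$, and $Z_h$ are built to encode precisely the dynamics \eqref{eq:dynamics} together with the boundary data $x(0)=x_0$ and $x_T=0$, this applies verbatim to the minimum-energy problem that defines controllability, so no separate argument is required for part~(1).

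For the equality $V_c=Y_c^2$ I would specialize to the convex cost $f(x,u)=\norm{u}_2^2=\sum_{i=0}^{T-1}\norm{u(i)}^2$, which is exactly the objective of $h_c$ in \eqref{eq:controllability_SLS} after the substitution $u(i)=\phi_u(i)x_0$. Writing $h_c^{\mathrm{unc}}$ and $Y_c^{\mathrm{unc}}$ for the value function and volume obtained when the support constraints are dropped, part~(1) gives $h_c(x_0)=h_c^{\mathrm{unc}}(x_0)$ for $x_0$ almost everywhere. By \Cref{prop:ellipsoid-algorithm}, $h_c$ agrees off a null set with the continuous quadratic form $x_0\mapsto\norm{(-MM^\dagger N+N)x_0}^2$, and the proof of \Cref{thm:reduction} shows $h_c^{\mathrm{unc}}(x_0)=\norm{D^\dagger A^Tx_0}^2$; two continuous functions that are equal almost everywhere are equal everywhere, so the sublevel sets $\{x_0:h_c(x_0)\le1\}$ and $\{x_0:h_c^{\mathrm{unc}}(x_0)\le1\}$ coincide exactly. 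Their images under the map $x_0\mapsto A^Tx_0$ are then the same set, so $Y_c=Y_c^{\mathrm{unc}}$, and \Cref{thm:reduction} supplies $(Y_c^{\mathrm{unc}})^2=V_c$, hence $Y_c^2=V_c$.

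The one place that needs care — and the step I would treat as the main obstacle — is turning the almost-everywhere identity coming out of \Cref{thm:optimality_constrained_systems} into an exact equality of volumes: a priori the two defining sets could differ on a null set, and one must check that passing through the fixed linear image $A^T(\cdot)$ does not spoil the comparison (it does not, since $A^T$ sends null sets to null sets, and more strongly the continuity argument above makes the two sets literally identical before the map is applied). A secondary, bookkeeping point is to confirm that the corollary's hypothesis ``there exists a feasible $\phi\in\mathcal{S}$'' is read as including the SLS dynamics \eqref{eq:SLS_dynamics} and the terminal condition $x_T=0$ — exactly the feasibility already built into \eqref{eq:controllability_SLS} — so that \Cref{thm:optimality_constrained_systems} applies without modification.
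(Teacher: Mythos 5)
Your proposal is correct and follows essentially the same route as the paper: apply \Cref{thm:optimality_constrained_systems} to equate the constrained and unconstrained feasible sets almost everywhere, specialize to $f(x,u)=\norm{u}_2^2$, and invoke \Cref{thm:reduction} to conclude $V_c=Y_c^2$. Your only addition is a slightly more careful treatment of the almost-everywhere-to-volume step (using continuity of the two quadratic forms to make the sublevel sets literally identical), where the paper simply observes that the volume of the ellipsoid is unchanged by modification on a measure-zero set; both arguments are valid.
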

\begin{proof}
    By Theorem~\ref{thm:optimality_constrained_systems}, the theorem assumptions imply that $\cS(x_0)=\T(x_0)$ for $x_0$ almost everywhere. Therefore any optimization problem over the constrained set obtains the same value as the unconstrained problem for $x_0$ almost everywhere.
    
    Now select $f(x,u)=\norm{u}_2^2$. The above result implies that the optimization $h_c(x_0)$ in \eqref{eq:controllability_SLS} is over the same set with and without constraints, so the solution is the same for $x_0$ almost everywhere. Because the volume of an ellipse does not change over a measure-zero set, the volume over all $x_0$ is also the same. The result $V_c = Y_c^2$ then holds by Theorem~\ref{thm:reduction}.
\end{proof}

From this result, we conclude that for commonly-used cost functions such as quadratic cost, the addition of constraints does not necessarily hurt the performance of the controller.
This contrasts with some recent results on control \cite{shin2023near,lin2022decentralized} and reinforcement learning \cite{lin2021multi,zhang2023global}, where localized policies are less optimal than global policies. While this may seem surprising, it is worth noticing that we allow the controller to decide the (constrained) mapping $\vec{\phi}_u$ based on the initial state $x_0$, which is easier than committing to a policy before $x_0$ is sampled or picked by an adversary. Further, the SLS formulation allows us to consider a more general policy class $u(t)=K(t) x(t)$ than the static policy $u(t)=Kx(t)$ in \cite{shin2023near}. 

We show in the following numerical results how locality constraints satisfy this rank requirement and the controllability correspondingly does not change within this region. In contrast, actuation delay does not satisfy this condition and we observe a decrease in the controllability.
In addition to explaining the contrasting behavior of different types of constraints, computing the rank condition is faster and simpler to implement compared with computing the volume. We show a comparison in computation times in the numerical results.

\begin{remark}
    While the formulation of $h_c(x_0)$ might suggest that  one needs global knowledge of $x_0$ to solve $\phi_u$,
    one can show that when locality constraints are enforced, the value of each entry of $\phi_u$ depends only on the local entries of $x_0$. To see this, consider the following example: let $A$ be a tri-diagonal matrix (i.e., $A[i, j] = 0$ if $\abs{i-j}>1$), $B=I$, $T=1$ and the locality constraint to include only the immediate neighbors. Then $\supp \phi_u(t)$ is also tri-diagonal. Let $\phi_u$ denote $\phi_u(0)$ in this setting with only one time step. The minimization objective $\norm{\phi_u x_0}^2$ can be written as
    \begin{align*}
        \norm{\phi_u x_0}^2 &= 
            (\phi_u[0,0] x_0[0] + \phi_u[0,1] x_0[1])^2+\sum_{j=1}^{N_x-2}
            (\phi_u[j,j\tminus1] x_0[j\tminus1] + \phi_u[j,j] x_0[j] + \phi_u[j,j\text{+}1] x_0[j\text{+}1])^2 \\
            &\qquad + (\phi_u[N_x\tminus1 ,N_x\tminus2] x_0[N_x\tminus2] + \phi_u[N_x\tminus 1,N_x\tminus 1] x_0[N_x\tminus1])^2\,.
    \end{align*}
    Note that each $\phi_u[j]$ depends only on $x_0[j\tminus1]$ to $x_0[j\text{+}1]$. As this objective is \textit{row-separable} and the dynamical constraints are \textit{column separable}, $\phi_u$ can be computed locally using the alternating direction method of multipliers (ADMM), where at each iteration, a node only needs information from its direct neighbor. See \cite[Section 5.5]{anderson_system_2019} for details.
\end{remark}

\section{Numerical Results}\label{sec:numerics}

\subsection{Distributed Control of Linearized Swing Equations}
\begin{figure}
    \centering
    \includegraphics[scale=0.5]{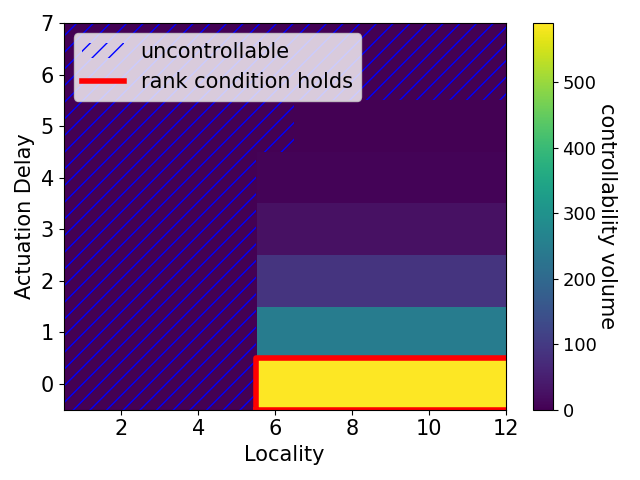}
    \caption{When actuation delay is zero, the addition of locality constraints on the system does not change the controllability volume for locality greater than five. The controllability decreases with increasing actuation delay.}
    \label{fig:power_system}
\end{figure}
To illustrate our results concretely, we consider the setting of the linearized swing equations, as used in \cite{li_global_2023}, which models power generation at multiple substations connected together in a power grid. This example allows us to illustrate how controllability changes based on actuation delay and locality constraints. The model is given by
{\small
\begin{align*}
    &x(t+1)[i]= \sum_{j:\G[i,j]=1} A_{ij} x(t)[j] + B_i u(t)[i] \\
    A_{ii} = &\begin{bmatrix}
        1 & \Delta t \\ -\frac{k_i}{m_i}\Delta t & 1-\frac{d_i}{m_i}\Delta t
    \end{bmatrix},\ \ \ 
    A_{ij} = \begin{bmatrix}
        0 & 0 \\ \frac{k_{ij}}{m_i} \Delta t & 0
    \end{bmatrix}\,,\ 
    B_i = \begin{bmatrix}
        0 \\ 1
    \end{bmatrix}.
\end{align*}}
The state $x(t)[i]=[\theta_i(t);\omega_i(t)]$ is the phase angle deviation and frequency deviation at node $i$. The parameters $k_{ij}$, $m_i^{-1}$, and $d_i$ are coupling, inertia, and damping parameters, with $k_{i}=\sum_{j:\G[i,j]=1}k_{ij}$. In the numerical implementation, the discretization step is $\Delta t= 0.1$, $T=12$, and there are nine two-state subsystems with one actuator per subsystem, resulting in $N_x=18$ and $N_u=9$. The subsystems are arranged in a $3 \times 3$ grid, with possible neighbors of a particular subsystem being the other subsytems to the left, right, top or bottom. The coefficents were sampled uniformly $k_{ij}\sim [0.5,1]$, $m_i^{-1}\sim[0,2]$, $d_i\sim[1,1.5]$.
In Figure~\ref{fig:power_system}, we observe that the settings where rank condition and feasibility hold coincides the maximum controllability; importantly, we observe no decrease in controllability with locality constraints, until the locality is too tight and the system is no longer controllable. The controllability decreases with increasing actuation delay, and as expect, the rank condition does not hold for any non-zero actuation delay. When the delay becomes large the system is no longer controllable.

\subsection{Computation of Rank Condition Versus Volume}\label{subsec:computation_speed}
The rank condition, which rigorously explains why communication speed and locality constraints may not result in decreased controllability, is easier to implement and faster to compute for large systems. We recorded the total time required to compute the controllability volume for all possible combinations of communication and locality constraints, for a dynamical system with the following parameters: the dynamics matrix $A$ is given by a tri-diagonal stochastic matrix, $B$ is filled with one and zero entries with actuation density $0.75$ relative to the state dimension, the time horizon is set to $T=10$, and $40$ trials were recorded for each combination of constraints. As seen in Figure~\ref{fig:time_comparison}, the time is comparable for $N_x$ ranging from $5-10$, but the volume computation becomes increasingly slower with larger $N_x$, with up to $6 \times$ speedup using the rank condition instead of computing the total volume. 
\begin{figure}
    \centering
    \includegraphics[scale=0.4]{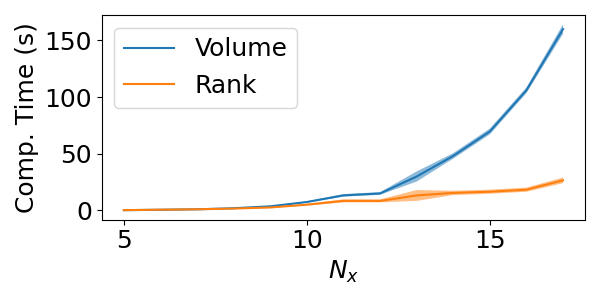}
    \caption{As the state dimension grows, the amount by which the rank condition computation time is faster than the controllability volume time increases. }
    \label{fig:time_comparison}
\end{figure}
 Users of our method can compute the controllability volume for the unconstrained system and then compute the rank condition and feasibility of the locality- and communication-constrained system to determine the controllability, which will either be equal to the unconstrained system or possible zero if the condition is not satisfied.

\section{Concluding Remarks}
This work proposes notions of constrained controllability $Y_c$ and constrained observability $Y_o$ using SLS, and shows that they reduce to the classic definitions for unconstrained systems. We provide closed-form solutions for computing $Y_c,Y_o$ and an efficient criteria to check whether a certain set of constraints can cause performance loss. In future work, we aim to apply this framework to solve real-world sensor and actuator problems, including air and water quality sensor placement for environmental monitoring. Additionally, we would like to investigate methods for computing the controllability in a distributed fashion for large-scale systems, which would greatly improve the range of settings in which this method can be used.

\appendix
    \section{Proof of Lemma~\ref{lem:Sx0}}
    To prove Lemma~\ref{lem:Sx0}, we first show that the set $\cS(x_0)$ can be written as 
    \begin{align*}
        \cS(x_0) = \{ 
        \textbf{s} :\textbf{s}=Z_p x_0 + Z_h x_0 \lambda\,,\ \lambda\in\R^{(N_u+N_x) N_x T} 
        \}\,,
    \end{align*}
    where $\textbf{s}=[\textbf{x};\textbf{u}]$. Using the dynamics and terminal constraint $x_T=0$, the closed-loop map $\phi$ can be written as
    \begin{align}\label{eq:cl_constraint}
        Z_{AB0} \phi = \begin{bmatrix}
            I \\ 0
        \end{bmatrix}\,.
    \end{align}
    This results in $\phi=Z_{AB0}^\dagger \begin{bmatrix}
            I \\ 0
        \end{bmatrix} + (I-Z_{AB0}^\dagger Z_{AB0})\Lambda$, where $\Lambda$ is a free variable. Using the definitions of $Z_p$ and $Z_h$ results in $\phi = Z_p + Z_h \Lambda$ . Then the state and input signals $\textbf{x}$ and $\textbf{u}$ result are given by $\textbf{s}=\phi x_0$, and we use the matrix $x_0$ to switch the multiplication of $\Lambda x_0$ to $x_0 \lambda$. The dimension of $\cS(x_0)$ is given by the degrees of freedom of $\textbf{s}$, which is $\rank{Z_h x_0} = \rank{Z_h}$ because $x_0$ is full row-rank by definition. 
\section{Proof of Lemma~\ref{lem:Tx0}}
    This proof proceeds similarly to that of Lemma~\ref{lem:Sx0}, except that now the constraint on $\phi$ is defined in terms of $\vec \phi$. From \eqref{eq:cl_constraint} and the additional requirements that $S_u\vec \phi_u =0$ and $S_x \vec \phi_x = 0$,
    \begin{align*}
        \begin{bmatrix}
            S_x \\ S_u
        \end{bmatrix} \left( \hat Z_p + \hat Z_h \vec \Lambda \right) = 0\ 
        \Rightarrow\  \hat Z_p[R] + \hat Z_h[R] \vec \Lambda\,,
    \end{align*}
    resulting in $\Lambda$ being parameterized as
    \begin{align*}
        \vec \Lambda = - \hat Z_h[R]^\dagger \hat Z_p[R] + (I-Z_h[R]^\dagger Z_h[R]) \mu\,,
    \end{align*}
    for a free variable $\mu$.
    Using the definition of $F$ and noting that $\hat Z_h[R]=Z_h^{blk}[R]$, the dimension of $\T(x_0)$ is given by $\rank{Z_h x_0 (I- F^\dagger F)}$.

\bibliographystyle{ieeetr}
\bibliography{references}

\end{document}